\documentclass[11pt]{article}
\usepackage{amsmath,amssymb,amstext,dsfont,fancyvrb,float,fontenc,graphicx,subfigure, theorem}

\title{\Large\bf On multivalent solutions of \\ Riemann-Hilbert problem}
  \vskip 1.5em
\author{\sc Vladimir Ryazanov}
\date{}

\setcounter{tocdepth}{0} \addtolength{\oddsidemargin}{6mm}
\addtolength{\evensidemargin}{-12mm} \addtolength{\voffset}{-6mm}
\addtolength{\textheight}{18mm}
\addtolength{\headsep}{-3pt}
\cleardoublepage \pagestyle{myheadings}
\usepackage[strict]{changepage}
\def\abstractname{Abstract -}   
\def\abstract{\begin{adjustwidth}{1cm}{1cm} \par    \footnotesize \noindent {\bf \abstractname}
\def\endabstract{ \end{adjustwidth} \smallskip }}


{\theorembodyfont{\itshape}\newtheorem{theorem}{Theorem}[section]}
{\theorembodyfont{\itshape}\newtheorem{proposition}{Proposition}[section]}
{\theorembodyfont{\itshape}}
{\theorembodyfont{\itshape}\newtheorem{lemma}{Lemma}[section]}
{\theorembodyfont{\itshape}}
{\theorembodyfont{\rm}}
{\theorembodyfont{\rm}}
{\theorembodyfont{\rm}}
{\theorembodyfont{\rm }}


 \begin{document}
\maketitle
\vskip 1.5em

\vskip 1.5em
 \begin{abstract}
It is proved the existence of multivalent solutions for the
Riemann-Hilbert problem in the general settings of finitely
connected domains bounded by mutually disjoint Jordan curves,
mea\-su\-rab\-le coefficients and measurable boundary data. The
theorem is formulated in terms of harmonic measure and principal
asymptotic values. It is also given the corresponding reinforced
criterion for domains with rectifiable boundaries stated in terms of
the natural parameter and non\-tan\-gen\-tial limits. Furthemore, it
is shown that the dimension of the spaces of these solutions is
infinite.
 \end{abstract}

\begin{keywords} Riemann-Hilbert problem, Jordan curves, harmonic measures, principal
asymptotic values, nontangential limits.
\end{keywords}

\begin{MSC}
primary   31A05, 31A20, 31A25, 31B25, 35Q15; se\-con\-da\-ry 30E25,
31C05, 34M50, 35F45.
\end{MSC}

\section{Introduction}

This note is a continuation of the paper \cite{R1} where the
Riemann-Hilbert problem was resolved in these general settings for
simply connected domains and where you could find the brief history
of the question and the corresponding references, see also
\cite{R2}, \cite{R3} and \cite{R4}. Here, on the basis of \cite{R1}
and a theorem due to Poincare, see e.g. Section VI.1 in \cite{Go},
it is given a resolution of the problem for finitely connected
domains.

\bigskip


\section{The case of circular domains}

Let us start from the simplest kind of multiply connected domains.
Recall that a domain $D$ in $\overline{\Bbb C}=\Bbb C\cup \{
\infty\}$ is called {\bf circular} if its boundary consists of a
finite number of mutually disjoint circles and points. We call such
a domain {\bf nondegenerate} if its boundary consists only of
circles.

\medskip

\begin{theorem}{}\label{thRHD}{\it\, Let $\Bbb D_*$ be a bounded
nondegenerate circular domain and let $\lambda:\partial\mathbb
D_*\to\mathbb C$, $|\lambda (\zeta)|\equiv 1$, and
$\varphi:\partial\mathbb D_*\to\mathbb R$ be measurable functions.
Then there exist multivalent analytic functions $f:\mathbb
D_*\to\mathbb C$ such that
\begin{equation}\label{eqLIMD} \lim\limits_{z\to\zeta}\ \mathrm
{Re}\ \{\overline{\lambda(\zeta)}\cdot f(z)\}\ =\
\varphi(\zeta)\end{equation} along any nontangential path to a.e.
$\zeta\in\partial\mathbb D_*$.} \end{theorem}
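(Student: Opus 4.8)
The plan is to reduce everything to the simply connected case settled in \cite{R1} and then to glue the pieces along the boundary circles by a reflection series whose convergence is exactly the theorem of Poincar\'e cited in the Introduction. Enumerate the boundary circles of $\mathbb D_*$ as $C_0,C_1,\dots,C_n$, with $C_0$ the outer one, so that $\mathbb D_*=B_0\setminus(\overline{B_1}\cup\dots\cup\overline{B_n})$, $B_j$ being the open round disk bounded by $C_j$. For each $j$ let $U_j$ be the component of $\overline{\mathbb C}\setminus C_j$ that contains $\mathbb D_*$ (thus $U_0=B_0$ and $U_j=\overline{\mathbb C}\setminus\overline{B_j}$ for $j\ge 1$; the point $\infty$ possibly lying in some $U_j$ is immaterial since $\mathbb D_*$ is bounded). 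Each $U_j$ is carried onto the unit disk by a M\"obius transformation, which is conformal up to and including $C_j$ and hence maps nontangential paths to nontangential paths and preserves null sets on the boundary. So the main theorem of \cite{R1}, applied on $U_j$ with coefficient $\lambda|_{C_j}$ and an arbitrary measurable datum, yields analytic functions on $U_j$ that realise that datum as the nontangential limit of $\mathrm{Re}\{\overline{\lambda}\cdot(\,\cdot\,)\}$ a.e.\ on $C_j$.

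First take $f=f_0+\dots+f_n$ on $\mathbb D_*=\bigcap_j U_j$, where $f_j$ solves the Riemann--Hilbert problem on $U_j$ with datum $\varphi|_{C_j}$. Fix $k$. Since $C_k$ lies at positive distance from $C_j=\partial U_j$ for every $j\ne k$, each such $f_j$ is single valued and analytic in a full neighbourhood of $C_k$, and therefore along almost every nontangential path to $\zeta\in C_k$ one gets $\mathrm{Re}\{\overline{\lambda(\zeta)}f(z)\}\to\varphi(\zeta)+\delta_k(\zeta)$, with $\delta_k:=\sum_{j\ne k}\mathrm{Re}\{\overline{\lambda}\,f_j\}|_{C_k}$ a measurable ``defect''. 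So $f$ solves the problem modulo $(\delta_k)_k$, and it remains to choose the data on the circles so that all the defects cancel: writing $\varphi_j$ for the datum imposed on $C_j$ and $f_j=f_j[\varphi_j]$ for the associated solution on $U_j$, one must solve the fixed point equation $(\mathrm{Id}+\mathcal T)(\varphi_j)_j=(\varphi|_{C_j})_j$, where $\mathcal T$ sends $(\varphi_j)_j$ to the tuple of defects it produces.

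Resolving this fixed point is the heart of the matter, and it is here that nondegeneracy of the circular domain is used in full: one step of $\mathcal T$ amounts, modulo the solution operators of \cite{R1}, to a reflection in one of the circles $C_j$, so iterating $\mathcal T$ and summing the Neumann series $\sum_{m\ge 0}(-\mathcal T)^m$ organises the correction as a sum over the group $G$ generated by the reflections in $C_0,\dots,C_n$, i.e.\ as a Poincar\'e-type series. The reflected copies $g(\overline{\mathbb D_*})$, $g\in G$, are pairwise nonoverlapping and contract geometrically with the word length (the limit set of the associated Schottky group being a Cantor set of zero area), which is precisely what makes the series converge --- this is the content of Poincar\'e's theorem, Section~VI.1 in \cite{Go}. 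Summing it gives $f$ on $\mathbb D_*$, and on each $C_k$ all but finitely many terms are analytic across $C_k$ while the tail converges locally uniformly, so the nontangential limit passes through the sum and \eqref{eqLIMD} holds a.e.\ on $C_k$ for every $k$.

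The main obstacle is exactly this last step: proving the correction series converges and justifying the interchange of the almost‑everywhere nontangential limits with it; the reduction to \cite{R1} and the accounting of the defects are routine by comparison. One further technical point, handled as in \cite{R1}, is that $\varphi$ is only measurable and possibly non\-integrable, so before applying the solution operators one peels off the part of $\varphi$ carried by a subset of $\partial\mathbb D_*$ of small harmonic measure on which $|\varphi|$ is large; this keeps all defects measurable and leaves the convergence mechanism untouched.
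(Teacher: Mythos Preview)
Your approach is genuinely different from the paper's and contains a real gap. The paper does not attempt any additive decomposition over the boundary circles or any Neumann/Poincar\'e series. Instead it invokes the Poincar\'e theorem of \cite{Go}, Section~VI.1, for what it actually states: there is a locally conformal (universal covering) map $g:\mathbb D\to\mathbb D_*$. One then pulls the data back to $\partial\mathbb D$ by $\Lambda=\lambda\circ g$, $\Phi=\varphi\circ g$ (this makes sense because $\partial\mathbb D$ minus a countable set is a countable union of arcs on each of which $g$ extends conformally across by reflection, so measurability and nontangential approach are preserved), solves \emph{once} on $\mathbb D$ via the simply connected result of \cite{R1}, and sets $f=F\circ h$ with $h$ the multivalued inverse of $g$. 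The multivalence of $f$ comes entirely from the multivalence of $h$; no iteration or gluing is needed.

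The gap in your argument is the convergence of the correction series. The operator $\mathcal T$ is built from the solution map $\varphi_j\mapsto f_j[\varphi_j]$ of \cite{R1}, and that map is neither uniquely determined nor bounded in any norm available here: for merely measurable $\varphi_j$ the resulting $f_j$ is unbounded on $U_j$, so its trace on an interior circle $C_k\subset U_j$ carries no a~priori size control, and you have no contraction estimate for $\mathcal T$. Your identification of one step of $\mathcal T$ with ``a reflection in one of the circles $C_j$'' is not correct---the solution operator in \cite{R1} is a Schwarz-type integral plus a free imaginary part, not a M\"obius reflection---so the series you write is not a Poincar\'e theta series, and the theorem you cite (used in the paper only for the existence of the covering map $g$) does not yield its convergence. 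The subsequent interchange of a.e.\ nontangential limits with an infinite sum of unbounded analytic functions is then unjustified as well. The bookkeeping of defects is fine, but the fixed-point step, which you correctly identify as the heart of the matter, does not go through as written.
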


\medskip

\begin{proof} Indeed, by the Poincare theorem, see e.g. Theorem VI.1 in
\cite{Go}, there is a locally conformal mapping $g$ of the unit disc
$\Bbb D=\{ z\in\Bbb C: |z|<1\}$ onto $\Bbb D_*$. Let $h:\Bbb
D_*\to\Bbb D$ be the corresponding multivalent analytic function
that is inverse to $g$. $\Bbb D_*$ without a finite number of cuts
is simply connected and hence $h$ has there only single-valued
branches that are extended to the boundary by the Caratheodory
theorem.

By Section VI.2 in \cite{Go}, $\partial\Bbb D$ except a countable
set of its points consists of a coun\-table collection of arcs every
of which is a one-to-one image of a circle in $\partial\Bbb D_*$
without its one point under an extended branch of $h$. Note that by
the reflection principle $g$ is conformally extended into a
neighborhood of every such arc and, thus, nontangential paths to its
points go into nontangential paths to the corresponding points of
circles in $\partial\Bbb D_*$ and inversely.

Setting $\Lambda = \lambda\circ g$ and $\Phi = \varphi\circ g$ with
the extended $g$ on the given arcs of $\partial\Bbb D$ we obtain the
suitable measurable functions on $\partial\Bbb D$. By Theorem 2.1 in
\cite{R1} or in \cite{R3} there exist analytic functions $F:\Bbb
D\to\Bbb C$ such that
\begin{equation}\label{F}
\lim\limits_{w\to\eta}\ \mathrm {Re}\
\{\overline{\Lambda(\eta)}\cdot F(w)\}\ =\ \Phi(\eta) \end{equation}
along any nontangential path to a.e. $\eta\in\partial\mathbb D$. By
the above arguments $f=F\circ h$ are desired multivalent analytic
solutions of (\ref{eqLIMD}).
\end{proof}

\bigskip

In particular, choosing $\lambda\equiv 1$ in (\ref{eqLIMD}), we
obtain the following statement.

\begin{proposition}\label{prDIR}
{\it\, Let $\Bbb D_*$ be a bounded nondegenerate circular domain and
let $\varphi:\partial \mathbb D_*\to\mathbb R$ be a measurable
function. Then there exist multivalent analytic functions $f:\mathbb
D_*\to\mathbb C$ such that
\begin{equation}\label{eqLIMDIRA} \lim\limits_{z\to\zeta}\ \mathrm
{Re}\
 f(z)\ =\ \varphi(\zeta)
\end{equation}
along any nontangential path to a.e. $\zeta\in\partial\mathbb
D_*$.}\end{proposition}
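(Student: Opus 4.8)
The plan is to deduce Proposition~\ref{prDIR} as an immediate special case of Theorem~\ref{thRHD}. Take $\lambda:\partial\mathbb D_*\to\mathbb C$ to be the constant function $\lambda(\zeta)\equiv 1$. This $\lambda$ is trivially measurable and satisfies $|\lambda(\zeta)|\equiv 1$, so the pair $(\lambda,\varphi)$ meets the hypotheses of Theorem~\ref{thRHD} with the given measurable $\varphi:\partial\mathbb D_*\to\mathbb R$.

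With this choice $\overline{\lambda(\zeta)}=1$, hence $\mathrm{Re}\,\{\overline{\lambda(\zeta)}\cdot f(z)\}=\mathrm{Re}\, f(z)$, and the boundary relation \eqref{eqLIMD} collapses exactly to \eqref{eqLIMDIRA}. Therefore the multivalent analytic functions $f:\mathbb D_*\to\mathbb C$ produced by Theorem~\ref{thRHD} already satisfy \eqref{eqLIMDIRA} along every nontangential path to a.e. $\zeta\in\partial\mathbb D_*$, which is the assertion.

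If one prefers a self-contained derivation rather than a reference to the more general theorem, the same three-step scheme used in the proof of Theorem~\ref{thRHD} applies verbatim: first, pull $\varphi$ back to $\partial\mathbb D$ through the boundary-extended Poincar\'e locally conformal map $g:\mathbb D\to\mathbb D_*$, obtaining a measurable $\Phi=\varphi\circ g$; second, solve the corresponding problem on the disc with trivial coefficient, i.e.\ invoke Theorem 2.1 in \cite{R1} (or its Dirichlet specialization) to get an analytic $F:\mathbb D\to\mathbb C$ with $\mathrm{Re}\,F\to\Phi$ nontangentially a.e.; third, set $f=F\circ h$ with $h$ the multivalent analytic inverse of $g$, and use the reflection-principle observation that $g$ extends conformally across the relevant boundary arcs, so nontangential approach is preserved in both directions.

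I expect no genuine obstacle here: the statement is a corollary, and the only point deserving a word is the verification that the constant coefficient $\lambda\equiv 1$ is admissible — it is, being measurable and unimodular — so that no normalization of the modulus and no extra regularity of $\varphi$ are required.
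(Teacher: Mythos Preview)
Your argument is correct and matches the paper's own treatment exactly: the paper derives Proposition~\ref{prDIR} simply by taking $\lambda\equiv 1$ in Theorem~\ref{thRHD}, and your verification that this constant is admissible (measurable and unimodular) is the only detail worth noting. The optional self-contained variant you sketch is likewise just a repetition of the proof of Theorem~\ref{thRHD} specialized to $\Lambda\equiv 1$, so there is nothing to add.
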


\bigskip

\section{Domains bounded by rectifiable Jordan curves}

To resolve the Riemann-Hilbert problem in the case of domains
bounded by a finite number of Jordan curves we should extend to the
case the known results of Caratheodory (1912), Lindel\"of (1917), F.
and M. Riesz (1916) and Lavrentiev (1936).

\begin{lemma}{}\label{multi}{\it\, Let $D$ be a bounded domain in
$\mathbb C$ whose boundary consists of a finite number of mutually
disjoint Jordan curves, $\mathbb D_*$ be a bounded nondegenerate
circular domain in $\mathbb C$ and let $\omega : D\to\mathbb D_*$ be
a conformal mapping. Then

\medskip

(i) $\omega$ can be extended to a homeomorphism of $\overline D$
onto $\overline{\mathbb D_*}$;

\medskip

(ii) $\arg\ [\omega(\zeta)-\omega(z)]-\arg\ [\zeta-z]\to\mathrm
const$ as $z\to\zeta$ whenever $\partial D$ has a tangent at
$\zeta\in\partial D$;

\medskip

(iii) for rectifiable $\partial D$, $\mathrm{length}\
\omega^{-1}(E)=0$ whenever $|E|=0$, $E\subset\partial\mathbb D_*$;

\medskip

(iv) for rectifiable $\partial D$, $|\omega({\cal E})|=0$ whenever
$\mathrm{length}\ {\cal E}=0$, ${\cal E}\subset\partial D$.

}\end{lemma}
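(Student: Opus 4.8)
The plan is to reduce Lemma~\ref{multi} to the classical simply connected theory by means of a covering/cut argument, exactly as in the proof of Theorem~\ref{thRHD}. The domain $D$ is finitely connected, bounded by mutually disjoint Jordan curves $\Gamma_0,\Gamma_1,\dots,\Gamma_n$ (with $\Gamma_0$ the outer boundary), and $\omega:D\to\mathbb D_*$ is a conformal equivalence onto a nondegenerate circular domain, whose boundary consists of circles $C_0,C_1,\dots,C_n$. The key observation is that removing a finite system of disjoint crosscuts $\sigma_1,\dots,\sigma_n$ from $D$ — one joining $\Gamma_0$ to each inner curve $\Gamma_j$, chosen so that the $\sigma_j$ are mutually disjoint arcs — produces a simply connected Jordan domain $D'$, and $\omega$ maps $D'$ conformally onto the simply connected domain $\mathbb D_*'$ obtained by deleting the images of the crosscuts. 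On $D'$ each of the four assertions is a known theorem, so the work is to patch the local statements across the boundary and show the cuts contribute nothing.

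**Proving the four parts.**

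\noindent\emph{(i) Homeomorphic extension.} First I would note that each boundary component $\Gamma_j$ is a Jordan curve, so by the Carath\'eodory extension theorem applied to $D'$ (a Jordan domain), $\omega|_{D'}$ extends continuously and injectively to $\overline{D'}$, carrying $\Gamma_j$ (minus the crosscut endpoints, then by continuity including them) homeomorphically onto $C_j$. Gluing back along the crosscuts — the two sides of $\sigma_j$ are identified and map to the same arc of $\omega(\sigma_j)$ — yields a continuous bijection $\overline D\to\overline{\mathbb D_*}$; since $\overline D$ is compact and $\overline{\mathbb D_*}$ Hausdorff, it is a homeomorphism. One must check the extensions from adjacent cuts agree, which is immediate because they are restrictions of the single conformal map $\omega$ on $D$.

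\noindent\emph{(ii) Conformality at tangent points; (iii), (iv) null-set transfer.} For (ii), at a point $\zeta\in\partial D$ where $\partial D$ has a tangent, $\zeta$ lies on a single $\Gamma_j$ which near $\zeta$ is a sub-arc of the boundary of the Jordan domain $D'$; then Lindel\"of's theorem (the classical statement that $\arg[\omega(\zeta)-\omega(z)]-\arg[\zeta-z]$ tends to a constant at a tangent point) applies directly on $D'$, and nontangential approach in $D$ near $\zeta$ coincides with nontangential approach in $D'$ since $\zeta$ is an interior point of the boundary arc $\Gamma_j\setminus\{\text{cut endpoints}\}$ — here one chooses the crosscuts to avoid the (at most countably many, or in any case a null set of) tangent points, which is harmless. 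For (iii) and (iv), when $\partial D$ is rectifiable each $\Gamma_j$ is rectifiable, and the F.~and M.~Riesz theorem together with Lavrentiev's theorem — applied to the Jordan domain $D'$, whose boundary is rectifiable (a finite union of rectifiable arcs of the $\Gamma_j$ and the rectifiable crosscuts, which we may take to be, say, straight segments or analytic arcs) — give absolute continuity in both directions between length on $\partial D'$ and arclength/linear measure on $\partial\mathbb D_*'$. Since length and $|\cdot|$ are countably additive and the finitely many crosscut-arcs carry no mass obstruction (a single rectifiable arc has finite length and its $\omega$-image has finite length by Lavrentiev), the null-set statements pass from $\partial D'$ to all of $\partial D$ and $\partial\mathbb D_*$ by summing over the pieces.

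**Main obstacle.** The genuine technical point — not a routine calculation but the crux — is verifying that the crosscuts can be chosen so that (a) $D'$ really is a Jordan domain with rectifiable boundary, (b) the two sides of each cut receive matching boundary values of $\omega$ so the re-gluing produces a well-defined continuous map, and (c) the exceptional null sets in (ii)--(iv) are not corrupted by the finitely many cut points. Point (b) deserves care: a priori the prime-end boundary of $D'$ assigns two distinct boundary points to the two sides of $\sigma_j$, and one must argue that $\omega$ takes the same radial/angular limit from both sides — this follows because $\omega$ is already a homeomorphism of $D$ onto $\mathbb D_*$ (from the interior) and $\sigma_j$ is a genuine arc in the open set $D$ on which $\omega$ is continuous, so the limits from the two sides are just $\omega$ evaluated along $\sigma_j$ and automatically coincide. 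Once this bookkeeping is in place, each of (i)--(iv) is an immediate corollary of its classical simply connected counterpart, and I would present the proof in that order, stating the four classical inputs (Carath\'eodory, Lindel\"of, F.~and M.~Riesz, Lavrentiev) as the engine and the crosscut reduction as the only new ingredient.
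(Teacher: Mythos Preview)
Your approach is essentially the paper's: reduce to the simply connected case by a finite system of crosscuts and then invoke, in order, Carath\'eodory, Lindel\"of, F.\ and M.\ Riesz, and Lavrentiev. The only organizational difference is the direction of the cuts: the paper slices the \emph{circular} domain $\mathbb D_*$ (by explicit straight segments) and sets $D^*:=\omega^{-1}(\mathbb D^*)$, whereas you slice $D$ and push the cuts forward by $\omega$. The paper's choice is marginally cleaner on two counts. First, it makes $\mathbb D^*$ a Jordan domain by construction, while your route needs the extra verification that $\omega(D')$ is a Jordan domain---in particular that each $\omega(\sigma_j)$ actually lands at single points of $\partial\mathbb D_*$ at its two ends, which is a mild circularity since that is part of what (i) asserts; it is most easily dispatched by taking $\sigma_j:=\omega^{-1}(\text{segment in }\mathbb D_*)$, i.e.\ reverting to the paper's choice. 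Second, for (ii) the paper explicitly uses the reflection principle across the circular arcs of $\partial\mathbb D_*$ (so the ``circular'' half of the factorization through the unit disk is conformal across the boundary and contributes nothing to the angular distortion), a step your sketch absorbs into the phrase ``Lindel\"of's theorem applies directly on $D'$.'' None of this is a genuine gap; the substance and the four classical inputs are the same.
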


\medskip

\begin{proof} (i) Indeed, we are able to transform $\mathbb
D_*$ into a simply connected domain $\mathbb D^*$ through a finite
sequence of cuts. Thus, we come to the desired conclusion applying
the Caratheodory theorems to simply connected domains $\mathbb D^*$
and $D^*:=\omega^{-1}(\mathbb D^*)$, see e.g. Theorem 9.4 in
\cite{CL} and Theorem II.C.1 in \cite{Ko}.

(ii) In the construction from the previous item, we may assume that
the point $\zeta$ is not the end of the cuts in $D$ generated by the
cuts in $\mathbb D_*$ under the extended mapping $\omega^{-1}$.
Thus, we obtain to the desired conclusion twice applying the
Caratheodory theorems, the reflection principle for conformal
mappings and the Lindel\"of theorem for the Jordan domains, see e.g.
Theorem II.C.2 in \cite{Ko}.

Points (iii) and (iv) are proved similarly to the last item on the
basis of the corresponding results of F. and M. Riesz and Lavrentiev
for Jordan domains with rectifiable boundaries, see e.g. Theorem
II.D.2 in \cite{Ko}, and \cite{L}, see also the point III.1.5 in
\cite{P}.
\end{proof}

\begin{theorem}{}\label{thRHR}{\it\, Let $D$ be a bounded domain in
$\mathbb C$ whose boundary consists of a finite number of mutually
disjoint rectifiable Jordan curves and $\lambda:\partial D\to\mathbb
C$, $|\lambda (\zeta)|\equiv 1$, and $\varphi:\partial D\to\mathbb
R$ be measurable functions with respect to the natural parameter on
$\partial D$. Then there exist multivalent analytic functions
$f:\mathbb D\to\mathbb C$ such that along any nontangential path
\begin{equation}\label{eqLIM} \lim\limits_{z\to\zeta}\ \mathrm {Re}\
\{\overline{\lambda(\zeta)}\cdot f(z)\}\ =\ \varphi(\zeta)
\quad\quad\quad\mbox{for}\ \ \mbox{a.e.}\ \ \ \zeta\in\partial D
\end{equation}
with respect to the natural parameters of the boundary components of
$D$.}
\end{theorem}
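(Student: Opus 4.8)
The plan is to reduce Theorem~\ref{thRHR} to Theorem~\ref{thRHD} by transporting the Riemann--Hilbert data from the rectifiable Jordan domain $D$ to a circular domain $\mathbb D_*$ via the conformal mapping supplied by Lemma~\ref{multi}, exactly in the spirit of the proof of Theorem~\ref{thRHD} itself (which reduced the circular case to the simply connected case of \cite{R1}). First I would invoke the Poincar\'e/Riemann-type uniformization to fix a bounded nondegenerate circular domain $\mathbb D_*$ and a conformal mapping $\omega:D\to\mathbb D_*$; Lemma~\ref{multi}(i) extends $\omega$ to a homeomorphism $\overline D\to\overline{\mathbb D_*}$, so it also carries $\partial D$ homeomorphically onto $\partial\mathbb D_*$.

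Next I would push the coefficients forward: set $\lambda_*=\lambda\circ\omega^{-1}$ and $\varphi_*=\varphi\circ\omega^{-1}$ on $\partial\mathbb D_*$. The point requiring care is that $\lambda$ and $\varphi$ are measurable \emph{with respect to the natural (length) parameter} on $\partial D$, whereas Theorem~\ref{thRHD} wants measurability with respect to (length/arc) measure on $\partial\mathbb D_*$. This is precisely what Lemma~\ref{multi}(iii)--(iv) are for: because $\partial D$ is rectifiable, $\omega$ maps length-null sets on $\partial D$ to (harmonic-measure/length)-null sets on $\partial\mathbb D_*$ and conversely, so the pushforward of a length-measurable function is measurable with respect to arc length on the circles, and a.e.\ statements transfer in both directions. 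Thus $\lambda_*$ is unimodular and measurable and $\varphi_*$ is real and measurable on $\partial\mathbb D_*$, and Theorem~\ref{thRHD} yields multivalent analytic functions $f_*:\mathbb D_*\to\mathbb C$ with
\begin{equation*}
\lim_{w\to\xi}\ \mathrm{Re}\ \{\overline{\lambda_*(\xi)}\cdot f_*(w)\}\ =\ \varphi_*(\xi)
\end{equation*}
along nontangential paths for a.e.\ $\xi\in\partial\mathbb D_*$.

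Finally I would pull back, defining $f=f_*\circ\omega:D\to\mathbb C$, which is again multivalent analytic since $\omega$ is conformal and globally single-valued. It remains to check that nontangential paths and nontangential limits are preserved. Here Lemma~\ref{multi}(ii) does the work: at every boundary point $\zeta\in\partial D$ where $\partial D$ has a tangent (hence at a.e.\ point, since $\partial D$ is rectifiable and admits a tangent length-a.e.), the angular distortion $\arg[\omega(\zeta)-\omega(z)]-\arg[\zeta-z]$ tends to a constant, so a Stolz angle at $\zeta$ maps into a Stolz angle at $\omega(\zeta)$ and vice versa. Consequently a nontangential approach $z\to\zeta$ in $D$ corresponds to a nontangential approach $\omega(z)\to\omega(\zeta)$ in $\mathbb D_*$, and the limit relation for $f_*$ at $\omega(\zeta)$ transfers verbatim to the relation $\lim_{z\to\zeta}\mathrm{Re}\{\overline{\lambda(\zeta)}f(z)\}=\varphi(\zeta)$ for $f$ at $\zeta$, with $\lambda(\zeta)=\lambda_*(\omega(\zeta))$ and $\varphi(\zeta)=\varphi_*(\omega(\zeta))$ by construction. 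Combining the two null-set transfers (to discard the length-null set where there is no tangent, and the length-null set on $\partial D$ whose image is the exceptional set for $f_*$), we obtain \eqref{eqLIM} for a.e.\ $\zeta\in\partial D$ with respect to the natural parameter.

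The main obstacle I anticipate is the bookkeeping of the two different boundary measures and their exceptional sets: one must be sure that ``a.e.\ with respect to arc length on $\partial\mathbb D_*$'' and ``a.e.\ with respect to the natural parameter on $\partial D$'' match up under $\omega$, which is exactly why one cannot avoid the F.\ and M.\ Riesz / Lavrentiev absolute-continuity statements collected in Lemma~\ref{multi}(iii)--(iv); everything else (analyticity of the composition, homeomorphic boundary extension, preservation of Stolz angles) is routine once (i) and (ii) of that lemma are in hand. I would note in passing that, as in the remark following Theorem~\ref{thRHD}, the special choice $\lambda\equiv1$ gives the corresponding Dirichlet-type statement for rectifiable Jordan boundaries.
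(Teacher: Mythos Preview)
Your proposal is correct and follows essentially the same route as the paper: conformally map $D$ onto a bounded nondegenerate circular domain $\mathbb D_*$, use Lemma~\ref{multi}(i) for the boundary homeomorphism, parts~(iii)--(iv) to transfer measurability and a.e.\ statements between the natural parameter on $\partial D$ and arc length on $\partial\mathbb D_*$, part~(ii) together with the a.e.\ existence of tangents on rectifiable curves to match nontangential approaches, and then apply Theorem~\ref{thRHD} and pull back. The paper adds only a couple of clarifying remarks you glossed over: the existence of $\omega$ is quoted from Goluzin (Theorem~V.6.2), nondegeneracy of $\mathbb D_*$ is justified via removability of isolated singularities, boundedness via an inversion if needed, and the measurability transfer is spelled out using that measurable sets split as a $\sigma$-compact set plus a null set.
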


\medskip

\begin{proof} This case is reduced to the case of a bounded nondegenerate
circular domain $\mathbb D_*$ in the following
way. First, there is a conformal mapping $\omega$ of $D$ onto a
circular domain $\mathbb D_*$, see e.g. Theorem V.6.2 in \cite{Go}.
Note that $\mathbb D_*$ is not degenerate because isolated
singularities of conformal mappings are removable that is due to the
well-known Weierstrass theorem, see e.g. Theorem 1.2 in \cite{CL}.
Applying in the case of need the inversion with respect to a
boundary circle of $\mathbb D_*$, we may assume that $\mathbb D_*$
is bounded.

\medskip

By point (i) in Lemma \ref{multi} $\omega$ can be extended to a
homeomorphisms of $\overline D$ onto $\overline{\mathbb D_*}$. If
$\partial D$ is rectifiable, then by point (iii) in Lemma
\ref{multi} $\mathrm{length}\ \omega^{-1}(E)=0$ whenever
$E\subset\partial\mathbb D_*$ with $|E|=0$, and by (iv) in Lemma
\ref{multi}, conversely, $|\omega({\cal E})|=0$ whenever ${\cal
E}\subset\partial D$ with $\mathrm{length}\ {\cal E}=0$.

\medskip

In the last case $\omega$ and $\omega^{-1}$ transform measurable
sets into measurable sets. Indeed, every measurable set is the union
of a sigma-compact set and a set of measure zero, see e.g. Theorem
III(6.6) in \cite{S}, and continuous mappings transform compact sets
into compact sets. Thus, a function $\varphi:\partial D\to\mathbb R$
is measurable with respect to the natural parameter on $\partial D$
if and only if the function
$\Phi=\varphi\circ\omega^{-1}:\partial\mathbb D_*\to\mathbb R$ is
measurable with respect to the linear measure on $\partial\mathbb
D_*$.

\medskip

By point (ii) in Lemma \ref{multi}, if $\partial D$ has a tangent at
a point $\zeta\in\partial D$, then $\arg\
[\omega(\zeta)-\omega(z)]-\arg\ [\zeta-z]\to\mathrm const$ as
$z\to\zeta$. In other words, the conformal images of sectors in $D$
with a vertex at $\zeta$ is asymptotically the same as sectors in
$\mathbb D_*$ with a vertex at $w=\omega(\zeta)$. Thus,
nontangential paths in $D$ are transformed under $\omega$ into
nontangential paths in $\mathbb D_*$ and inversely. Finally, a
rectifiable Jordan curve has a tangent a.e. with respect to the
natural parameter and, thus, Theorem \ref{thRHR} follows from
Theorem \ref{thRHD}.
\end{proof}

\bigskip

In particular, choosing $\lambda\equiv 1$ in (\ref{eqLIM}), we obtain the following statement.

\begin{proposition}\label{prDR}
{\it\, Let $D$ be a bounded domain in $\mathbb C$ whose boundary
consists of a finite number of mutually disjoint rectifiable Jordan
curves and let $\varphi:\partial D\to\mathbb R$ be measurable. Then
there exist multivalent analytic functions $f:D\to\mathbb C$ such
that
\begin{equation}\label{eqLIMDRA} \lim\limits_{z\to\zeta}\ \mathrm
{Re}\
 f(z)\ =\ \varphi(\zeta)
\quad\quad\quad\mbox{for}\ \ \mbox{a.e.}\ \ \ \zeta\in\partial D
\end{equation}
along any nontangential path with respect to the natural parameters
of the boundary components of $\partial D$.}\end{proposition}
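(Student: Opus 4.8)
The plan is to obtain this statement as an immediate specialization of Theorem~\ref{thRHR}. First I would note that the constant function $\lambda\equiv 1$ on $\partial D$ trivially satisfies $|\lambda(\zeta)|\equiv 1$ and is measurable with respect to the natural parameter on each boundary component of $D$. Hence Theorem~\ref{thRHR} applies to the data $(\lambda,\varphi)=(1,\varphi)$ and yields multivalent analytic functions $f:D\to\mathbb C$ with
\[
\lim\limits_{z\to\zeta}\ \mathrm{Re}\ \{\overline{\lambda(\zeta)}\cdot f(z)\}\ =\ \varphi(\zeta)
\]
along any nontangential path for a.e. $\zeta\in\partial D$ with respect to the natural parameters of the boundary components.

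It then remains only to simplify the left-hand side: since $\lambda(\zeta)\equiv 1$ we have $\overline{\lambda(\zeta)}\cdot f(z)=f(z)$, so the displayed relation reduces to $\lim_{z\to\zeta}\mathrm{Re}\, f(z)=\varphi(\zeta)$, which is precisely~(\ref{eqLIMDRA}). There is no genuine obstacle here; the whole content has already been secured in Theorem~\ref{thRHR}, and the present proposition merely records the classical Dirichlet-type case $\lambda\equiv 1$ for domains bounded by finitely many mutually disjoint rectifiable Jordan curves.
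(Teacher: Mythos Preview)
Your proof is correct and mirrors the paper's own approach exactly: the proposition is obtained simply by choosing $\lambda\equiv 1$ in Theorem~\ref{thRHR}.
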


\bigskip

\section{Domains bounded by arbitrary Jordan curves}

See detailed comments on terms of {\bf harmonic measures and
principal asymptotic values} in the paper \cite{R1} or in \cite{R3}.


\begin{theorem}{}\label{thRH1}{\it\, Let $D$ be a bounded domain in $\mathbb C$ whose
boundary consists of a finite number of mutually disjoint Jordan
curves and let $\lambda:\partial D\to\mathbb C$, $|\lambda
(\zeta)|\equiv 1$, and $\varphi:\partial D\to\mathbb R$ be
measurable functions with respect to harmonic measures in $D$. Then
there exist multivalent analytic functions $f:\mathbb D\to\mathbb C$
such that
\begin{equation}\label{eqLIMA} \lim\limits_{z\to\zeta}\ \mathrm {Re}\
\{\overline{\lambda(\zeta)}\cdot f(z)\}\ =\ \varphi(\zeta)
\quad\quad\quad\mbox{for}\ \ \mbox{a.e.}\ \ \ \zeta\in\partial D
\end{equation}
with respect to harmonic measures in $D$ in the sense of the unique
principal asymptotic value.}
\end{theorem}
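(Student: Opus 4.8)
The plan is to reduce Theorem \ref{thRH1} to Theorem \ref{thRHD} by the same circular-uniformization strategy used in Theorem \ref{thRHR}, but replacing rectifiability-dependent facts (linear measure, natural parameter, tangents) with their conformally invariant substitutes (harmonic measure, principal asymptotic values). First I would invoke the uniformization theorem for finitely connected domains (Theorem V.6.2 in \cite{Go}) to obtain a conformal map $\omega$ of $D$ onto a bounded nondegenerate circular domain $\mathbb D_*$; nondegeneracy follows, as in the proof of Theorem \ref{thRHR}, from the Weierstrass removable-singularity theorem, and boundedness after an inversion in a boundary circle if needed. By point (i) of Lemma \ref{multi}, $\omega$ extends to a homeomorphism $\overline D\to\overline{\mathbb D_*}$, so $\omega$ and $\omega^{-1}$ carry Borel sets to Borel sets and, in particular, set up a measure-class-preserving correspondence between harmonic measure on $\partial D$ and harmonic measure on $\partial\mathbb D_*$ — this is just conformal invariance of harmonic measure. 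Since $\mathbb D_*$ is circular with circular boundary components, harmonic measure there is mutually absolutely continuous with linear (Lebesgue) measure on each bounding circle. Hence a function $\varphi:\partial D\to\mathbb R$ is measurable with respect to harmonic measure in $D$ if and only if $\Phi:=\varphi\circ\omega^{-1}$ is Lebesgue-measurable on $\partial\mathbb D_*$, and the same for $\Lambda:=\lambda\circ\omega^{-1}$ with $|\Lambda|\equiv 1$.

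Next I would feed $\Lambda$ and $\Phi$ into Theorem \ref{thRHD} to get multivalent analytic functions $F:\mathbb D_*\to\mathbb C$ satisfying $\lim_{w\to w_0}\mathrm{Re}\{\overline{\Lambda(w_0)}\cdot F(w)\}=\Phi(w_0)$ along every nontangential path to a.e. $w_0\in\partial\mathbb D_*$, and set $f:=F\circ\omega$. The remaining task is purely geometric: to transfer the nontangential-limit statement on $\mathbb D_*$ back to a \emph{principal asymptotic value} statement on $D$. Here point (ii) of Lemma \ref{multi} does the work at every point $\zeta\in\partial D$ where $\partial D$ has a tangent — there $\omega$ is ``asymptotically conformal'' in the sense $\arg[\omega(\zeta)-\omega(z)]-\arg[\zeta-z]\to\mathrm{const}$, so sectors with vertex $\zeta$ go to sectors with vertex $\omega(\zeta)$ and nontangential paths correspond in both directions; at such $\zeta$ the ordinary nontangential limit of $\mathrm{Re}\{\overline{\lambda(\zeta)}f(z)\}$ exists and equals $\varphi(\zeta)$, which is in particular the principal asymptotic value. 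The main obstacle is that an arbitrary Jordan curve need not have a tangent on a set of full harmonic measure, so the sector-preservation argument of Theorem \ref{thRHR} is not available everywhere; one must instead show that the pullback under $\omega$ of a nontangential limit at $w_0\in\partial\mathbb D_*$ is a \emph{principal asymptotic value} at $\zeta=\omega^{-1}(w_0)$ for harmonic-measure-a.e.\ $\zeta$, using only the homeomorphic extension and conformality of $\omega$ in $D$.

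To handle that, I would use the notion of principal asymptotic value along the ``chords'' or principal paths defined via the circular model: the arcs in $\partial\mathbb D_*$ that are images under a branch of the inverse locally conformal map described in the proof of Theorem \ref{thRHD}, pulled back through $\omega$, furnish at $\mathrm{harm}$-a.e.\ $\zeta\in\partial D$ a distinguished family of paths terminating at $\zeta$ along which $F\circ\omega$ tends to $\varphi(\zeta)$; since nontangential approach in $\mathbb D_*$ is detected by harmonic measure, and the Lindelöf-type principle (the same circle of ideas behind Lemma \ref{multi}(ii)) guarantees that existence of a limit along one ``nice'' path forces the principal asymptotic value to coincide with it, one concludes that $\varphi(\zeta)$ is the unique principal asymptotic value of $\mathrm{Re}\{\overline{\lambda(\zeta)}f(z)\}$ at $\zeta$. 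Finally, assembling the pieces: measurability is preserved by conformal invariance of harmonic measure, existence of $F$ comes from Theorem \ref{thRHD}, and the boundary behavior transfers through Lemma \ref{multi}(i)–(ii) together with the definition of principal asymptotic value; this yields the multivalent analytic $f=F\circ\omega$ claimed in \eqref{eqLIMA}, completing the proof.
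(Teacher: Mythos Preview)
Your reduction to the circular domain $\mathbb D_*$ via $\omega$, the measurability transfer through conformal invariance of harmonic measure, and the application of Theorem~\ref{thRHD} to obtain $F$ are all exactly what the paper does. The gap is in the last step, where you must pass from nontangential limits of $F$ on $\partial\mathbb D_*$ to principal asymptotic values of $f=F\circ\omega$ on $\partial D$. You correctly note that Lemma~\ref{multi}(ii) is unavailable because an arbitrary Jordan curve need not have tangents on a set of full harmonic measure, but your proposed substitute --- a ``Lindel\"of-type principle'' saying that a limit along one nice path forces the principal asymptotic value --- does not apply here: Lindel\"of's theorem requires the function to be bounded (or at least normal), and the multivalent solutions produced by Theorem~\ref{thRHD} are, as the paper stresses in Section~5, generally unbounded. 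So the argument as written does not close.

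The device the paper uses instead is Bagemihl's ambiguous-point theorem (see \cite{B} or Theorem~III.1.8 in \cite{No}): for an \emph{arbitrary} function on a domain, the set of boundary points admitting two arcs with disjoint cluster sets is at most countable. Since $\omega^{-1}$ carries nontangential paths in $\mathbb D_*$ to arcs in $D$ ending at $\zeta$, you already have, at harmonic-measure-a.e.\ $\zeta$, one arc along which $\mathrm{Re}\{\overline{\lambda(\zeta)}f(z)\}\to\varphi(\zeta)$; Bagemihl then forces $\varphi(\zeta)$ into every arc-cluster set at all but countably many $\zeta$, which is precisely the statement that $\varphi(\zeta)$ is the unique principal asymptotic value. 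Replacing your Lindel\"of appeal with this single citation fixes the proof.
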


\begin{proof} By the reasons of the first item in the proof of Theorem \ref{thRHR},
there is a conformal mapping $\omega$ of $D$ onto a bounded
nondegenerate circular domain $\mathbb D_*$ in $\mathbb C$. Set
$\Lambda = \lambda\circ\Omega$ and $\Phi = \varphi\circ\Omega$ where
$\Omega :=\omega^{-1}$ extended to $\partial\Bbb D_*$ by point (i)
in Lemma \ref{multi}.

Note that harmonic measure zero is invariant under conformal
mappings. Thus, arguing as in the third item of the proof to Theorem
\ref{thRHR}, we conclude that the functions $\Lambda$ and $\Phi$ are
measurable with respect to harmonic measures in $\mathbb D_*$.

By Theorem \ref{thRHD} there exist multivalent analytic functions
$F:\Bbb D_*\to\Bbb C$ such that
$$
\lim\limits_{w\to\eta}\ \mathrm {Re}\
\{\overline{\Lambda(\eta)}\cdot F(w)\}\ =\ \Phi(\eta)
$$
along any nontangential path to a.e. $\eta\in\partial\mathbb D_*$.

By the construction the functions $f:=F\circ\omega$ are desired
multivalent analytic solutions of (\ref{eqLIMA}) in view of the
Bagemihl theorem, see e.g. Theorem 2 in \cite{B} or Theorem III.1.8
in \cite{No}. \end{proof}

\bigskip

In particular, choosing $\lambda\equiv 1$ in (\ref{eqLIMA}), we
obtain the following consequence.

\medskip
\begin{proposition}\label{prDA}
{\it\, Let $D$ be a bounded domain in $\mathbb C$ whose boundary
consists of a finite number of mutually disjoint Jordan curves and
let $\varphi:\partial D\to\mathbb R$ be a measurable function with
respect to harmonic measures in $D$. Then there exist multivalent
analytic functions $f:D\to\mathbb C$ such that
\begin{equation}\label{eqLIMDA} \lim\limits_{z\to\zeta}\ \mathrm
{Re}\
 f(z)\ =\ \varphi(\zeta)
\quad\quad\quad\mbox{for}\ \ \mbox{a.e.}\ \ \ \zeta\in\partial D
\end{equation}
with respect to harmonic measures in $D$ in the sense of the unique
principal asymptotic value.}\end{proposition}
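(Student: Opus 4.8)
The plan is to read this off from Theorem \ref{thRH1} as the particular case of a constant coefficient. First I would take $\lambda(\zeta)\equiv 1$ on $\partial D$: this function is measurable with respect to harmonic measures in $D$ and satisfies $|\lambda(\zeta)|\equiv 1$, so Theorem \ref{thRH1} applies with this $\lambda$ and the given $\varphi$ and furnishes multivalent analytic functions $f:D\to\mathbb C$ with
\begin{equation*}
\lim\limits_{z\to\zeta}\ \mathrm{Re}\ \{\overline{\lambda(\zeta)}\cdot f(z)\}\ =\ \varphi(\zeta)
\end{equation*}
for a.e. $\zeta\in\partial D$ with respect to harmonic measures in $D$, in the sense of the unique principal asymptotic value. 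Since $\overline{\lambda(\zeta)}\equiv 1$, the expression under $\mathrm{Re}$ is just $f(z)$, and the displayed relation is exactly (\ref{eqLIMDA}). In this route there is essentially no obstacle: all the work is already contained in Theorem \ref{thRH1}, and the present proposition is a pure specialization.

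Alternatively, one could give a self-contained argument by repeating, with $\lambda\equiv 1$, the reduction used for Theorem \ref{thRH1}. I would pick a conformal mapping $\omega$ of $D$ onto a bounded nondegenerate circular domain $\mathbb D_*$ as in the first paragraph of the proof of Theorem \ref{thRHR}, extend it to $\overline D$ by point (i) of Lemma \ref{multi}, note that $\Phi:=\varphi\circ\omega^{-1}$ is measurable with respect to harmonic measures in $\mathbb D_*$ because sets of harmonic measure zero are preserved by conformal maps, invoke Proposition \ref{prDIR} on $\mathbb D_*$ to obtain multivalent analytic $F:\mathbb D_*\to\mathbb C$ with $\lim_{w\to\eta}\mathrm{Re}\,F(w)=\Phi(\eta)$ along nontangential paths for a.e. $\eta\in\partial\mathbb D_*$, and then set $f:=F\circ\omega$. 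The only step needing genuine care — the same one handled in the proof of Theorem \ref{thRH1} — is converting the nontangential limit relation on $\mathbb D_*$ into the statement about the unique principal asymptotic value of $\mathrm{Re}\,f$ on $\partial D$, and this is exactly where the Bagemihl ambiguous-point theorem is used. Either way, the proof is short and the substantive content lies entirely in the results already established above.
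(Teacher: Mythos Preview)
Your first paragraph is exactly the paper's own argument: the proposition is obtained by taking $\lambda\equiv 1$ in Theorem \ref{thRH1}. The alternative route you sketch is also fine but unnecessary here.
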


\bigskip

\section{On the dimension of spaces of solutions}

By the Lindel\"of maximum principle, see e.g. Lemma 1.1 in
\cite{GM}, it follows the uniqueness theorem for the Dirichlet
problem in the class of bounded harmonic functions on the unit disk.
Our multivalent analytic solutions are generally speaking not
bounded and we have the new phenomena.

\begin{theorem}{}\label{thDIM1}{\it\, The spaces of multivalent analytic solutions
in Theorems \ref{thRHD}, \ref{thRHR} and \ref{thRH1}
and in Propositions \ref{prDIR}, \ref{prDR} and \ref{prDA} have the
infinite dimension.}
\end{theorem}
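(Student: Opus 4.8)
The plan is to reduce the whole statement, as usual in this paper, to the case of the unit disk $\mathbb D$ via the conformal maps already constructed in Lemma~\ref{multi} and Theorem~\ref{thRHD}, and then to exhibit an explicit infinite-dimensional family of \emph{homogeneous} solutions (i.e. solutions with boundary data $\varphi\equiv 0$). Indeed, if $f_0$ is one particular solution produced by Theorems~\ref{thRHD}, \ref{thRHR} or \ref{thRH1} for given $\lambda,\varphi$, and if $\{g_k\}$ is a linearly independent family of multivalent analytic functions on the domain satisfying $\lim_{z\to\zeta}\mathrm{Re}\,\{\overline{\lambda(\zeta)}\cdot g_k(z)\}=0$ along nontangential paths for a.e.\ boundary point, then $f_0+\sum c_k g_k$ is again a solution for every finite linear combination, and linear independence of the $g_k$ gives the claimed infinite dimension. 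So the crux is the construction of infinitely many linearly independent homogeneous solutions.

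First I would treat the circular-domain case of Theorem~\ref{thRHD}. Via the locally conformal Poincar\'e map $g:\mathbb D\to\mathbb D_*$ and its multivalent inverse $h$, it suffices (exactly as in the proof of Theorem~\ref{thRHD}) to produce infinitely many linearly independent functions $G_k:\mathbb D\to\mathbb C$, analytic, with $\lim_{w\to\eta}\mathrm{Re}\,\{\overline{\Lambda(\eta)}\cdot G_k(w)\}=0$ nontangentially a.e.\ on $\partial\mathbb D$; then $g_k=G_k\circ h$ work for $\mathbb D_*$. Here I would invoke Theorem~2.1 of \cite{R1}/\cite{R3} itself, applied not to the single datum $\Phi$ but to a whole sequence of data: in \cite{R1} one constructs, for \emph{any} measurable $\Phi$, analytic solutions of the Riemann–Hilbert problem, and in particular for $\Phi\equiv 0$ one already gets nontrivial homogeneous solutions (the construction is not unique — one may multiply by singular inner-type factors or add functions vanishing nontangentially a.e.). Concretely, the standard device is this: pick a sequence of pairwise disjoint closed arcs $\{J_k\}\subset\partial\mathbb D$, and for each $k$ apply Theorem~2.1 of \cite{R1} with boundary datum $\chi_{J_k}$ (the indicator of $J_k$, a bona fide measurable function); this yields analytic $G_k$ with $\mathrm{Re}\,\{\overline{\Lambda}\,G_k\}\to\chi_{J_k}$ nontangentially a.e. Because the $J_k$ are disjoint, no finite linear combination $\sum c_k G_k$ can be the zero function unless all $c_k=0$: evaluating the boundary relation $\mathrm{Re}\,\{\overline{\Lambda}\sum c_k G_k\}=\sum c_k\chi_{J_k}$ a.e., the right side is the step function with value $c_k$ on $J_k$, which vanishes identically only when each $c_k=0$. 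Hence $\{G_k\}$ is linearly independent, and so is $\{g_k=G_k\circ h\}$ since $h$ is locally one-to-one; this proves the result for Theorems~\ref{thRHD} and (through the reductions already carried out in the paper) for \ref{thRHR} and \ref{thRH1}, and a fortiori for Propositions~\ref{prDIR}, \ref{prDR}, \ref{prDA} by taking $\lambda\equiv 1$. One must only note that the transfer of the linear-independence property through the conformal maps $\omega$ of Lemma~\ref{multi} is immediate, since precomposition with a (locally) injective map is a linear injection on function spaces, and the boundary behaviour is preserved by points (i)–(iv) of Lemma~\ref{multi} and the Bagemihl theorem exactly as in the proofs above.

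The main obstacle, and the only place needing care, is ensuring that the functions $G_k$ produced by Theorem~2.1 of \cite{R1} genuinely span an infinite-dimensional space rather than collapsing — i.e.\ that Theorem~2.1 really does return, for the distinct data $\chi_{J_k}$, functions realizing those distinct boundary values in the prescribed (principal-asymptotic / nontangential) sense; this is exactly the content of \cite{R1} and needs no new argument, but it is worth stating that the step function $\sum c_k\chi_{J_k}$ determines the coefficients $c_k$ uniquely from its a.e.\ values, which is what forces linear independence. A secondary point is that in Theorem~\ref{thRH1} the boundary limits are understood in the sense of principal asymptotic values with respect to harmonic measure; since harmonic-measure-null sets are conformally invariant and the Bagemihl ambiguous-point theorem was already used to transfer such limits under $\omega$, the same transfer applies verbatim to each $G_k$, so no additional difficulty arises there. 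Thus the dimension is not merely positive but countably infinite in every case.
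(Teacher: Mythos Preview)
Your reduction to the unit disk via the Poincar\'e map and the conformal maps of Lemma~\ref{multi} is exactly what the paper does, and your remark that precomposition with a locally injective map is a linear injection on function spaces is the right way to transfer infinite-dimensionality. The paper, however, does not construct an infinite family from scratch: it simply invokes Theorem~5.1 of \cite{R1} (or \cite{R3}), which already asserts that for each fixed pair $(\Lambda,\Phi)$ the solution space of problem~(\ref{F}) on $\mathbb D$ is infinite-dimensional, and then observes that the constructions in Theorems~\ref{thRHD}, \ref{thRHR}, \ref{thRH1} are of the form $F\mapsto F\circ h$ or $F\mapsto F\circ h\circ\omega$, hence carry an infinite-dimensional affine space to an infinite-dimensional one.

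Your ``concrete device'', on the other hand, has a genuine gap. You announce that you will build \emph{homogeneous} solutions $g_k$ (i.e.\ with $\mathrm{Re}\,\{\overline{\lambda}\,g_k\}\to 0$ a.e.) so that $f_0+\sum c_k g_k$ still has boundary datum $\varphi$. But the functions $G_k$ you actually construct satisfy $\mathrm{Re}\,\{\overline{\Lambda}\,G_k\}\to \chi_{J_k}$ a.e., which is \emph{not} zero. Consequently $f_0+\sum c_k g_k$ has boundary datum $\varphi+\sum c_k\,\chi_{J_k}\circ g$ rather than $\varphi$, and is not a solution of the original problem. Your linear-independence argument is correct, but it only shows that solutions to \emph{different} Riemann--Hilbert problems are linearly independent, which says nothing about the dimension of the solution set for a \emph{fixed} $(\lambda,\varphi)$. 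The sentence you wrote just before---about singular inner-type factors and analytic functions vanishing nontangentially a.e.---is in fact the right idea (and is essentially what lies behind Theorem~5.1 of \cite{R1}); you should have pursued that line instead of the indicator-function construction. Either cite Theorem~5.1 of \cite{R1} directly, as the paper does, or build genuinely homogeneous $G_k$ on $\mathbb D$ (for instance from an independent family of analytic functions whose nontangential limits lie in $i\Lambda(\eta)\mathbb R$ a.e.).
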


\medskip

\begin{proof} By Theorem 5.1 in \cite{R1} or in \cite{R3} the space
of solutions of the problem (\ref{F}) has the infinite infinite
dimension. Thus, the conclusion follows by the construction of
solutions in the given theorems.
\end{proof}

\bigskip


{\footnotesize

  }


 { \footnotesize
\medskip
\medskip
 \vspace*{1mm}

\noindent {\it Vladimir Ryazanov}\\
Institute of Applied Mathematics and Mechanics \\
National Academy of Sciences of Ukraine \\
1 Dobrovol'skogo Str., 84100 Slavyansk, Ukraine\\
E-mail: {\tt vl.ryazanov1@gmail.com}}


\begin{thebibliography}{100}

\bibitem{B}
F. Bagemihl, Curvilinear cluster sets of arbitrary functions, {\it
Proc. Nat. Acad. Sci. U.S.A.}, \textbf{41} (1955), 379–-382.

\bibitem{CL}
E. F. Collingwood, A. J. Lohwator, \textit{The theory of cluster
sets}, Cambridge Tracts in Math. and Math. Physics, No. 56,
Cambridge Univ. Press, Cambridge, 1966.

\bibitem{GM} {\it Garnett J.B., Marshall D.E.}, {Harmonic Measure},
Cambridge Univ. Press, Cambridge, 2005.

\bibitem{Go}
G. M. Goluzin,  \textit{Geometric theory of functions of a complex
 variable}, Transl. of Math. Monographs, Vol. 26, American Mathematical Society,
Providence, R.I. 1969.

\bibitem{Ko}
P. Koosis, {\it Introduction to $H_p$ spaces}, 2nd ed., Cambridge Tracts in Mathematics, 115, Cambridge Univ.
Press, Cambridge, 1998.

\bibitem{L}
M. Lavrentiev, On some boundary problems in the theory of univalent functions, \textit{Mat. Sbornik N.S.}
\textbf{1}(43), 6 (1936), 815–846 [in Russian].



\bibitem{No} { Noshiro K.} {\it Cluster sets}, Springer-Verlag, Berlin etc., 1960.

\bibitem{P}
 I. I. Priwalow, {\it Randeigenschaften analytischer Funktionen}, Hochschulb\"ucher f\"ur Mathematik, Bd. 25, Deutscher
Verlag der Wissenschaften, Berlin, 1956.

\bibitem{R}
 M. Riesz, Sur les functions conjuguees, \textit{Math. Z.} \textbf{27}, no. 2 (1927), 218-244.


\bibitem{R1}
V. Ryazanov, {\it On the Riemann-Hilbert Problem without Index},
\textit{Ann. Univ. Bucharest, Ser. Math.} \textbf{5 (LXIII)}, no. 1
(2014), 169-178.


\bibitem{R2}
V. Ryazanov, {\it Infinite dimension of solutions of the Dirichlet
problem}, \textit{Open Math.} (the former \textit{Central European
J. Math.}) \textbf{13}, no. 1 (2015), 348-350.

\bibitem{R3}
{\it V. Ryazanov}, {On the Riemann-Hilbert problem IV} // arXiv:
1308.2486v10 [math.CV] 11 Feb 2014, 1-10.

\bibitem{R4}
{\it V. Ryazanov}, {Infinite dimension of solutions for the
Dirichlet problem} // arXiv: 1402.2130v5 [math.CV] 30 Jul 2014, 1-5.

\bibitem{S}
 S. Saks, {\it Theory of the integral}, Warsaw, 1937; Dover Publications Inc., New York, 1964.


 \end{thebibliography}
 \end{document}